\theoremstyle{plain}
\newtheorem{thm}{Theorem}[section]
\newtheorem{theorem}[thm]{Theorem}
\newtheorem{lemma}[thm]{Lemma}
\newtheorem{proposition}[thm]{Proposition}
\theoremstyle{definition}
\newtheorem{remark}[thm]{Remark}
\newtheorem{question}[thm]{Question}
\numberwithin{equation}{section}
\newcommand{\C}{{\mathbb C}}
\newcommand{\BP}{{\mathbb P}}
\newcommand{\Z}{{\mathbb Z}}
\title [Generalized Kummer manifolds]{No cohomologically trivial 
non-trivial automorphism of generalized Kummer manifolds}
\author{Keiji Oguiso}
\address{Mathematical Sciences, the University of Tokyo, Meguro Komaba 3-8-1, Tokyo, Japan and Korea Institute for Advanced Study, Hoegiro 87, Seoul, 
133-722, Korea}
\email{oguiso@ms.u-tokyo.ac.jp}
\thanks{The author is supported by JSPS Grant-in-Aid (S) No 25220701, JSPS Grant-in-Aid (S) 15H05738, JSPS Grant-in-Aid (B) 15H03611, and by KIAS Scholar Program.}
\subjclass[2010]{14J50, 14J40}
\keywords{hyperk\"ahler manifold, generalized Kummer manifolds, cohomologically trivial automorphisms}
\dedicatory{Dedicated to Professor Tomohide Terasoma on the occasion of his sixtieth birthday}
\begin{document}

\maketitle

\begin{abstract}
For a hyperk\"ahler manifold deformation equivalent to a generalized Kummer manifold, we prove that the action of the automorphism group on the total Betti cohomolgy group is faithful. This is a sort of 
generalization of a work of Beauville and 
a more recent work of Boissi\`ere, Nieper-Wisskirchen and Sarti, concerning the action of the automorphism group of a generalized Kummer manifold on the second cohomology group. 
\end{abstract}

\section{Introduction}\label{sect1}

Thoughout this note, we work over $\C$. Our main result is Theorem \ref{main}. 

Global Torelli theorem for K3 surfaces (\cite{PS71}, \cite{BR75}, 
see also \cite{BHPV04}) says that the contravariant action 
$$\rho_2 : {\rm Aut}\,(S) \to {\rm GL}\, (H^2(S, \Z))\,\, ;\,\, g \mapsto g^*|_{H^2(S, \Z)}$$ 
is faithful for any K3 surface $S$. On the other hand, Dolgachev (\cite[4.4]{Do84}) and Mukai and Namikawa (\cite{MN84}, \cite{Mu10}) show that there are Enriques surfaces $E$ such that the action $\rho_2 : {\rm Aut}\,(E) \to {\rm GL}\, (H^2(E, \Z))$ is not faithful. Here and hereafter, we denote by 
$${\rm GL}\, (L) := {\rm Aut}_{{\rm group}}\,(L)$$ 
for a finitely generated abelian group $L$, possibly with non-trivial torsion.

Throughout this note, by a hyperk\"ahler manifold, we mean a simply-connected compact K\"ahler manifold $M$ admitting an everywhere non-degenerate global holomorphic $2$-form $\omega_M$ such that $H^0(M, \Omega_M^2) = 
\C \omega_M$. Standard examples of hyperk\"ahler manifolds are the Hilbert scheme 
${\rm Hilb}^n (S)$ of $0$-dimensional closed subschemes 
of length $n$ on a K3 surface $S$, the generalized Kummer manifold 
$K_{n-1}\, (A)$, of dimension $2(n-1) \ge 4$, associated to a 
$2$-dimensional complex torus $A$, and their deformations (\cite[Sections 6,7]{Be83}, see also Section \ref{sect2}). 

Beauville (\cite[Propositions 9, 10]{Be83-2}) 
considered a similar question for hyperk\"ahler manifolds and found the following:

\begin{theorem}\label{beauville}

\begin{enumerate}

\item The action $\rho_2 : {\rm Aut}\,({\rm Hilb}^n (S)) \to {\rm GL}\, (H^2({\rm Hilb}^n (S), \Z))$ is faithful. 

\item The action $\rho_2 : {\rm Aut}\,(K_{n-1}(A)) \to 
{\rm GL}\, (H^2(K_{n-1}(A), \Z))$ 
is not faithful. More precisely, $T(n) \subset {\rm Ker}\, \rho_2$. 
\end{enumerate}
\end{theorem}

Here $T(n) \simeq (\Z/n)^{\oplus 4}$ is the group of automorphisms induced by the group of $n$-torsion points $T[n] := \{a \in A \vert na =0\}$ of $A = {\rm Aut}^0(A)$. 

It is natural and interesting to determine ${\rm Ker}\, \rho_2$ in Theorem \ref{beauville} (2). In this direction, Boissi\`ere, Nieper-Wisskirchen and Sarti (\cite[Theorem 3, Corollary 5 (2)]{BNS11}) found the following complete answer:

\begin{theorem}\label{sarti}
${\rm Ker}\, (\rho_2 : {\rm Aut}\,(K_{n-1}(A)) \to 
{\rm GL}\, (H^2(K_{n-1}(A), \Z))) = T(n) \lhd \langle \iota \rangle$. 
\end{theorem}

Here $\iota$ is the automorphism induced by the inversion $-1$ of $A$ and $T(n) \lhd \langle \iota \rangle$ is the semi-direct product of $T(n)$ and $\langle \iota \rangle$, in which $T(n)$ is normal. 

It is also natural and interesting to ask if the action of ${\rm Aut}\, (K_{n-1}(A))$ on the total cohomology group 
$H^*(K_{n-1}(A), \Z) := \oplus_{k=0}^{4(n-1)} H^k(K_{n-1}(A), \Z)$ is faithful or not. 

Our aim is to answer this question in a slightly more generalized form:

\begin{theorem}\label{main}
The action $\rho : {\rm Aut}\,(Y) \to 
{\rm GL}\, (H^*(Y, \Z))$ is faithful for any hyperk\"ahler manifold $Y$ deformation equivalent to $K_{n-1}(A)$. 
\end{theorem}

First we prove Theorem \ref{main} for $K_{n-1}(A)$. By Theorem \ref{sarti}, it suffices to show that $g^* \vert_{H^*(K_{n-1}(A), \C)} \not= id$ for each $g \in (T(n) \lhd \langle \iota \rangle) \setminus \{id\}$. This is checked in Section \ref{sect3}. We then prove Theorem \ref{main} for any $Y$ in Section \ref{sect4}, by using the density result due to Markman and Mehrotra (\cite{MM12}). In Section \ref{sect5}, among other things, we remark a similar 
result for deformation of the Hilbert scheme of a K3 surface (Theorem \ref{hilb}). 

After posting this note on ArXiv (on 2012), Professor Yuri Tschinkel kindly informed me that the action 
$T(n) \lhd \langle \iota \rangle$ on $K_{n-1}(A)$ extends to a faithful action on any deformation $Y$ of $K_{n-1}(A)$, in such a way that the extended action is trivial on $H^2(Y, \Z)$ (\cite[Theorem 2.1, Proposition 3.1]{HT10}). In particular, this shows that the action 
$$\rho_2 : {\rm Aut}\, (Y) \to {\rm GL}(H^2(Y, \Z))$$ 
is not faithful even if $Y$ is generic.

I should also mention that Theorem \ref{main} is much motivated by the following question asked by Professor Dusa McDuff to me at the conference in Banff (July 2012), while the question itself is still completely open:

\begin{question}\label{McDuff}
Is there an example of a compact K\"ahler manifold $M$ such that the biholomorphic automorphism group is discrete, i.e., ${\rm Aut}^0\, (M) = \{id_M\}$, but with a biholomorphic automorphism $g \not= id_M$ being homotopic to $id_M$ in the group of diffeomorphisms?
\end{question}

{\bf Ackowledgement.} I would like to express my thanks to 
Professor Dusa McDuff for her inspiring question, to Professor Nessim Sibony for invitation to the conference at Banff (July 2012) and Professor Yuri Tschinkel for his interest in this work and a valuable information 
about a paper \cite{HT10}. I also would like to express my thanks to Professor Igor Dolgachev for valuable comments in the first version (in 2012) and for reminding me of this paper in his openning talk at the international conference celabrating Professor Shigeyuki Kond\=o's sixtieth birthday held at Nagoya on December 2017. I also would like to express my thanks to the referee for his/her careful reading.

\section{Preliminaries.}\label{sect2}

In this section, we mainly fix notations we shall use. We follow \cite{Be83} and \cite{Be83-2}. So, $K_n(A)$ in \cite{BNS11} is $K_{n-1}(A)$ in this note.

We refer to \cite[Section 7]{Be83} and \cite[Part III]{GHJ03} for more details on generalized Kummer manifolds 
and basic properties on hyperk\"ahler manifolds. 

Let $A$ be a $2$-dimensional complex torus and let $n$ be an integer such that 
$n \ge 3$. Let ${\rm Hilb}^n(A)$ be the Hilbert scheme of $0$-dimensional closed subschemes of $A$ of length $n$. Then ${\rm Hilb}^n(A)$ is a smooth K\"ahler manifold of dimension $2n$. Let 
$$\nu = \nu_A : {\rm Hilb}^n(A) \to {\rm Sym}^n(A) = A^n/S_n$$ be the Hilbert-Chow morphism. We denote the sum as $0$-cycles by $\oplus$ and the sum 
in $A$ by $+$. Then each element of ${\rm Sym}^n(A)$ is of the form 
$$\oplus_{i=1}^{k} x_i^{\oplus m_i}\,\, .$$
Here $x_i$ are distinct points on $A$ and $m_i$ are positive integers 
such that $\sum_{i=1}^{k} m_i = n$. We have the following surjective morphism 
$$s := s_A : {\rm Sym}^n(A) \to A\,\, ;\,\, \oplus_{i=1}^{k} x_i^{\oplus m_i} \mapsto \sum_{i=1}^{k} m_ix_i\,\, .$$
The generailzed Kummer manifold $K_{n-1}(A)$ is defined by 
$$K_{n-1}(A) := (s \circ \nu)^{-1}(0)\,\, .$$
Note that the morphism
$$s \circ \nu = s_A \circ \nu_A : {\rm Hilb}^n(A) \to A$$
is a smooth surjective morphism such that all fibers are isomorphic (\cite[Section 7]{Be83}). So, $K_{n-1}(A)$ is isomorphic to any fiber of $s_A \circ \nu_A$. 
One can also describe $K_{n-1}(A)$ in a slightly different way, as follows. Let $$A(n-1) := \{(P_1, P_2, \cdots , P_n) \in A^n\, \vert\, \sum_{i=1}^{n} P_i = 0\}\,\, .$$
Then $A(n-1)$ is a closed submanifold of $A^n$ and $A(n-1) \simeq A^{n-1}$. Moreover, $A(n-1)$ is stable under the action of $S_n$ on $A^n$ and 
$${\rm Sym}^n(A) \supset A^{(n-1)} := s^{-1}(0) = 
A(n-1)/S_n\,\, .$$
From this, we deduce that
$$K_{n-1}(A) = \nu^{-1}(A^{(n-1)}) = {\rm Hilb}^n(A) \times_{{\rm Sym}^n(A)} A^{(n-1)}\,\, .$$
Recall that $\dim\, {\rm Def}\, (A) = 4$, while 
$\dim\, {\rm Def}\, (K_{n-1}(A)) = 5$ for $n \ge 3$ and any local deformation 
of a hyperk\"ahler manifold is a hyperk\"ahler manifold (\cite[Section 7]{Be83}). So, there are hyperk\"ahler manifolds which are deformation equivalent to $K_{n-1}(A)$ but are not isomorphic to any generalized Kummer manifold.

From now until the end of this note, we denote by $X := K_{n-1}(A)$ ($n \ge 3$) the generalized Kummer manifold, of dimension $2(n-1)$, associated to a $2$-dimensional complex torus $A$ and by $K := T(n) \lhd \langle \iota \rangle$ the subgroup of ${\rm Aut}\, (K_{n-1}(A))$ defined in the Introduction. We also use the notations introduced in this section freely in the remaining sections.

\section{Proof of Theorem \ref{main} for $K_{n-1}(A)$.}\label{sect3}

In this section, we prove Theorem \ref{main} for $K_{n-1}(A)$. 

First we prove:
\begin{proposition}\label{iota}
Let $g \in K \setminus T(n)$. Then $g^* \vert_{H^3(X, \C)} \not= id$.
\end{proposition}

\begin{proof} Let $(z^1_i, z^2_i)$ ($1 \le i \le n$) 
be the standard global coordinates of the universal cover 
$\C^2_i$ of the $i$-th factor $A_i = A$ of $A^n$. Then the universal cover $\C^{2(n-1)}$ of $A(n-1) \simeq A^{n-1}$ is a closed submanifold of $\C^{2n}$ defined by
\begin{equation}\label{eq21}
z^1_1 + z^1_2 + \cdots + z^1_{n-1} + z^1_{n} = 0\,\, ,\,\, z^2_1 + z^2_2 + \cdots + z^2_{n-1} + z^2_{n} = 0\,\, .
\end{equation}
In particular, $(z^1_i, z^2_i)$ ($1 \le i \le n-1$) give the global coordinates of the universal cover $\C^{2(n-1)}$ of $A(n-1)$. 
Note that $1$-forms $dz^1_i$ and $dz^2_i$ ($1 \le i \le n$) can be regarded as global 
$1$-forms on $A(n-1)$. They satisfy
\begin{equation}\label{eq22}
dz^1_1 + dz^1_2 + \cdots + dz^1_{n-1} + dz^1_{n} = 0\,\, ,\,\, dz^2_1 + dz^2_2 + \cdots + dz^2_{n-1} + dz^2_{n} = 0\,\, ,
\end{equation}
and $\{dz^1_i, dz^2_i (1 \le i \le n-1)\}$ forms a basis of the space of global holomorphic $1$-forms on $A(n-1) \simeq A^{n-1}$.
Consider the following global 
$(2,1)$-form $\tilde{\tau}$ on $A(n-1)$:
$$\tilde{\tau} = dz^1_1 \wedge dz^2_1 \wedge d\overline{z}^2_1 + \cdots + 
dz^1_{n-1} \wedge dz^2_{n-1} \wedge d\overline{z}^2_{n-1} + 
dz^1_n \wedge dz^2_n \wedge d\overline{z}^2_n\,\, .$$
\begin{lemma}\label{form} 
$\tilde{\tau}$ decends to a non-zero element $\tau$ of $H^{2,1}(X)$.
\end{lemma}
\begin{proof} Recall that, for compact K\"ahler orbifolds, the Hodge decompsition is pure and the Hodge theory works in the same way as smooth compact manifolds (\cite{St77}). 

Since $\tilde{\tau}$ is $S_n$-invariant, it descends to 
a global $(2,1)$-form, say $\overline{\tau}$, on the compact K\"ahler orbifold 
$A^{(n-1)}$.  Then $\tau = (\nu \vert_X)^* 
\overline{\tau} \in H^{2,1}(X)$ under $\nu \vert_X : X \to A^{(n-1)}$. It remains to show that $\tau \not= 0$. 
Since $(\nu \vert_X)^*$ is injective, it suffices to show that $\overline{\tau} \not= 0$ in $H^{2,1}(A^{(n-1)})$. For this, it suffices to show that $\tilde{\tau} \not= 0$ in $H^{2,1}(A(n-1))$, as $q^*$ is also injective for the quotient map $q : A(n-1) \to A^{(n-1)}$. 
By Equation (\ref{eq22}), we have
$$\tilde{\tau} = dz^1_1 \wedge dz^2_1 \wedge d\overline{z}^2_1 + \cdots + 
dz^1_{n-1} \wedge dz^2_{n-1} \wedge d\overline{z}^2_{n-1} 
- (\sum_{k=1}^{n-1} dz^1_k) \wedge (\sum_{k=1}^{n-1} dz^2_k) \wedge 
(\sum_{k=1}^{n-1} d\overline{z}^2_k)\,\, .$$
This is the expression of $\tilde{\tau}$ in terms of the standard basis of $H^{2,1}(A(n-1))$. As $n-1 \ge 2$, the term 
$$dz^1_1 \wedge dz^2_2 \wedge d\overline{z}^2_2$$ 
appears with coefficient $-1$ in this expression. Hence $\tilde{\tau} \not= 0$ 
in $H^{2, 1}(A(n-1))$. This proves Lemma \ref{form}. 
\end{proof}

\begin{lemma}\label{form2} 
Let $g \in K \setminus T(n)$ and $\tau \in H^{2,1}(X)$ be as in Lemma \ref{form}. Then $g^* \tau = -\tau$. In particular, $g^*|_{H^3(X, \C)} \not= id$. 
\end{lemma}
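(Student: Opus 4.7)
The plan is to lift $g$ to an explicit automorphism of the universal cover of $A(n-1)$, compute the action on the coordinate form $\tilde{\tau}$ directly, and then transport the answer back to $X$ using the same injectivity-of-pullbacks argument that appeared at the end of Lemma (\ref{form}). Since $\langle \iota \rangle = \{id, \iota\}$, every element of $K \setminus T(n) = T(n) \cdot \iota$ can be written uniquely as $g = t \circ \iota$ with $t \in T(n)$. By the functoriality of the Hilbert--Chow morphism $\nu$ and of the $S_n$-quotient $A(n-1) \to A^{(n-1)}$ (both constructed in Section 2), the action of $g$ on $X$ fits into a commutative diagram with the induced action $\tilde{g} = \tilde{t} \circ \tilde{\iota}$ on ${\mathbf C}^{2(n-1)}$, so it suffices to verify $\tilde{g}^* \tilde{\tau} = -\tilde{\tau}$.

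Next I would compute the two contributions separately. The automorphism $\iota$ is induced from $-1 : A \to A$, so on $A(n-1)$ it lifts to $(z^1_i, z^2_i) \mapsto (-z^1_i, -z^2_i)$; hence $\tilde{\iota}^* dz^j_i = -dz^j_i$ and $\tilde{\iota}^* d\overline{z}^j_i = -d\overline{z}^j_i$. Each of the $n$ summands of $\tilde{\tau}$ is a wedge product of three such linear forms, so it is multiplied by $(-1)^3 = -1$, giving $\tilde{\iota}^* \tilde{\tau} = -\tilde{\tau}$. A translation $t \in T(n)$ lifts to an affine map $z \mapsto z + a$ on the universal cover, which fixes each of the constant one-forms $dz^j_i$ and $d\overline{z}^j_i$, so $\tilde{t}^* \tilde{\tau} = \tilde{\tau}$. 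Combining these, $\tilde{g}^* \tilde{\tau} = \tilde{\iota}^* \tilde{t}^* \tilde{\tau} = -\tilde{\tau}$.

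Finally, pushing this identity through the $S_n$-quotient $A(n-1) \to A^{(n-1)}$ and through $\nu \vert X : X \to A^{(n-1)}$, and using the injectivity of both pullback maps exactly as at the end of the proof of Lemma (\ref{form}), one obtains $g^* \tau = -\tau$ in $H^{2,1}(X)$. Since $\tau \not= 0$ by Lemma (\ref{form}) and we work over ${\mathbf C}$, we have $-\tau \not= \tau$, so $g^* \not= id$ on $H^3(X, {\mathbf C})$ and a fortiori on $H^*(X, {\mathbf C})$. The one point that warrants real care, and which I expect to be the main obstacle, is the equivariance bookkeeping: verifying that the automorphism of $X$ attached to $g$ really fits into a commutative diagram with $\tilde{g}$ on $A(n-1)$, so that the form-level computation faithfully records the cohomological action on $X$. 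This is straightforward from the construction of $K_{n-1}(A)$ recalled in Section 2, but should be made explicit since the whole argument rests on it.
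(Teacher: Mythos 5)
Your proposal is correct and follows essentially the same route as the paper: the paper's proof likewise uses the equivariance of $g$ for $A(n-1) \rightarrow A^{(n-1)} \leftarrow X$, observes that $g^* dz^q_i = -dz^q_i$ because $g$ is a translation composed with the inversion, and concludes $g^*\tilde{\tau} = -\tilde{\tau}$, hence $g^*\tau = -\tau \neq \tau$. Your explicit decomposition $g = t \circ \iota$ and the lift to the universal cover merely spell out the paper's one-line computation in more detail.
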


\begin{proof} The automorphism $g$ acts equivariantly on $A(n-1) \rightarrow A^{(n-1)} \leftarrow X$. For $\iota$, hence for $g \in K \setminus T(n)$, we have 
$$\iota^*dz_i^q = -dz_i^q\,\, ,\,\,g^*dz_i^q = -dz_i^q\,\, (1 \le i \le n\,\, ,\,\, q=1, 2)\,\, .$$ 
Hence $g^*\tilde{\tau} = -\tilde{\tau}$ by the shape of $\tilde{\tau}$. Thus $g^*\tau = -\tau$. By Lemma \ref{form}, $\tau \not= 0$ in $H^{2,1}(X)$. Hence $g^* \vert_{H^3(X, \C)} \not= id$ as claimed. 
\end{proof}

Lemma \ref{form2} completes the proof of Proposition \ref{iota}. 
\end{proof}

Next we prove: 
\begin{proposition}\label{translation}
Let $a \in T(n) \setminus \{id\}$. 
Then $a^* \vert_{H^*(X, \C)} \not= id$. 
\end{proposition}

\begin{proof} Let $a \in T(n) \simeq (\Z/n\Z)^{\oplus 4}$ be an element of order $p \not= 1$ ($p$ is {\it not} necessarily a prime number). Set $d = n/p$. Then $d$ is a positive integer such that 
$d < n$. We freely regard $a$ also as a torsion element of order $p$ in $A$ and automorphisms of various spaces which are naturally and equivariantly induced by the translation automorphism $x \mapsto x +a$ of $A$. 

We wil show first Lemma \ref{fixed}, Theorem \ref{goettche} and Lemma \ref{lefschetz}, and then we will conclude the proof of Proposition \ref{translation}. 

\begin{lemma}\label{fixed}
The fixed locus $X^a$ consists of $p^3$ connected components $F_i$ 
($1 \le i \le p^3$). Moreover, each $F_i$ is isomorphic to the generalized 
Kummer manifold $K_{d-1}(A/\langle a \rangle)$ associated to the 
$2$-dimensional complex torus $A/\langle a \rangle$. 
\end{lemma}

\begin{proof} Let ${\mathcal S} \subset A$ be a $0$-dimensional closed subscheme of length $n$. As $\langle a \rangle$ acts freely on $A$, the quotient map $\pi : A \to A/\langle a \rangle$ is \'etale of degree $p$. It follows that $a_*{\mathcal S} = {\mathcal S}$ if and only if there is a $0$-dimensional closed subscheme ${\mathcal T} \subset A/\langle a \rangle$ of length $d = n/p$ such that ${\mathcal S} = \pi^*{\mathcal T}$. This ${\mathcal T}$ is clearly unique and we obtain an isomorphism 
\begin{equation}\label{eq30}
{\rm Hilb}^d(A/\langle a \rangle) \simeq ({\rm Hilb}^n(A))^a\,\, ;\,\, {\mathcal T} \mapsto \pi^*{\mathcal T}\,\, .
\end{equation}
Let ${\mathcal S} \in  ({\rm Hilb}^n(A))^a$. Then $\nu({\mathcal S}) \in ({\rm Sym}^n(A))^a$ as well, and $\nu({\mathcal S})$ is then of the form 
$$\nu({\mathcal S}) = \oplus_{i=1}^{k} 
\oplus_{j=0}^{p-1} (x_i + ja)^{\oplus m_i}\,\, $$
(and vice versa).  
Here $\sum_{i=1}^{k}m_i = d$ and all points $x_i + ja$ are distinct. Observe also that 
$$K_{n-1}(A)^a = ({\rm Hilb}^n(A))^a \cap K_{n-1}(A)\,\, .$$ 
As ${\mathcal S} \in ({\rm Hilb}^n(A))^a$ by our choice of ${\mathcal S}$, it follows from the equality above that ${\mathcal S} \in K_{n-1}(A)^a$ 
if and only if ${\mathcal S} \in K_{n-1}(A)$, i.e., ${\mathcal S} \in  ({\rm Hilb}^n(A))^a$ satisfies (by the definition of $K_{n-1}(A)$ and by the shape of $\nu({\mathcal S})$) that
\begin{equation}\label{eq31}
p(m_1x_1 + m_2x_2 + \cdots + m_kx_k + \alpha) = 0
\end{equation}
in $A$. Here $n(p-1)/2 \in \Z$ and $\alpha \in A$ is an element 
such that 
$$p\alpha = (n(p-1)/2) a$$ 
in $A$. We choose and fix such $\alpha$. 

Let $A[p]$ be the group of $p$-torsion points of $A$. Then, Equation (\ref{eq31}) is equivalent to 
\begin{equation}\label{eq32}
m_1x_1 + m_2x_2 + \cdots + m_kx_k + \alpha \in A[p]\,\, .
\end{equation}
Since $a$ is also a $p$-torsion point, Equation (\ref{eq32}) is also equivalent to
\begin{equation}\label{eqn33}
m_1\pi(x_1) + m_2\pi(x_2) + \cdots + m_k\pi(x_k) + \pi(\alpha) \in \pi(A[p]) = A[p]/\langle a \rangle\,\, .
\end{equation} 
Write ${\mathcal S} = \pi^*{\mathcal T}$. Then, Equation (\ref{eqn33}) holds if and only if ${\mathcal T}$ is in the fibers of 
$$s_{A/\langle a \rangle} \circ \nu_{A/\langle a \rangle} : {\rm Hilb}^d(A/\langle a \rangle) \to A/\langle a \rangle$$ 
over $\pi(A[p])$. We have $\vert \pi(A[p]) \vert = p^3$, 
as $a$ is also $p$-torsion. Hence, by the isomorphism (\ref{eq30}), the fixed locus $K_{n-1}(A)^a$ is isomorphic to the union of $p^3$ fibers of $s_{A/\langle a \rangle} 
\circ \nu_{A/\langle a \rangle}$ and  
each fiber is isomorphic to 
$K_{d-1}(A/\langle a \rangle)$ as remarked in Section \ref{sect2}. This completes the proof of Lemma \ref{fixed}.  
\end{proof}
Set 
$$\sigma(n) = \sum_{1 \le b \vert n} b\,\, ,$$ 
the sum of all positive divisors of a positive integer $n$. 
The following fundamental result due to G\"ottche and Soergel 
(\cite[Corollary 1]{GS93}, see also \cite{Go94}, \cite{De10}) is crucial in our proof:

\begin{theorem}\label{goettche}
The topological Euler number $\chi_{\rm top}(K_{n-1}(A))$ of $K_{n-1}(A)$ is 
$n^3 \sigma(n)$. (This is also valid for $n=1$, $2$.)
\end{theorem}

Now we consider the Lefschetz number of $h \in {\rm Aut}\, (X)$:
$$L(h) := \sum_{k=0}^{4(n-1)} (-1)^k\,\, {\rm tr}\,\, h^{*} \vert_{H^{k}(X, \C)}\,\, .$$

\begin{lemma}\label{lefschetz}

(1) If $h \in {\rm Aut}\, (X)$ is cohomologically trivial, then $L(h) = n^3\sigma(n)$. 

(2) $L(a) = n^3\sigma(d)$ for any element $a$ of order $p$ in $T(n) \setminus \{id\}$ with $d = n/p$. 
\end{lemma}

\begin{proof} If $h$ is cohomologically trivial, then ${\rm tr}\, h^{*} \vert_{H^{k}(X, \C)} = b_k(X)$. This implies (1). By the topological Lefschetz fixed point formula, Lemma \ref{fixed} and Theorem \ref{goettche}, we obtain
$$L(a) = \chi_{\rm top}(X^a) = p^3 \chi_{\rm top}(K_{d-1}(A/\langle a \rangle) 
= p^3\cdot d^3\sigma(d) = n^3\sigma(d)\,\, .$$ 
This is nothing but the assertion (2). This proves Lemma \ref{lefschetz}. 
\end{proof}

Since $d \vert n$ and $d \not= n$, it follows that 
$$\sigma(d) \le \sigma(n) -n < \sigma(n)\,\, .$$ 
Hence $a \in T(n) \setminus \{id\}$ is not cohomologically trivial by Lemma \ref{lefschetz}. 
This proves Proposition \ref{translation}. 
\end{proof}

Theorem \ref{main} for $K_{n-1}(A)$ now follows from Theorem \ref{sarti}, Proposition \ref{iota} and Proposition \ref{translation}. This completes the proof of Theorem \ref{main} for $K_{n-1}(A)$.

\section{Proof of Theorem \ref{main}.}\label{sect4}

In this section, we shall prove Theorem \ref{main} for any $Y$. 

Let $\Lambda = (\Lambda, (*, **))$ be a fixed abstract lattice 
isometric to 
$(H^2(K_{n-1}(A), \Z), b)$. Here $b$ is the Beauville-Bogomolov form of $K_{n-1}(A)$ (see eg. \cite[Example 23.20]{GHJ03}). 

Let $Y$ be a hyperk\"ahler manifold deformation equivalent to a generalized 
Kummer manifold $X = K_{n-1}(A)$. Let $g \in {\rm Aut}\, (Y)$ such that 
$g^* \vert_{H^*(Y, \Z)} = id$. We are going to show that $g = id_Y$. 

Let ${\mathcal M}^0$ be the connected component of 
the marked moduli space of ${\mathcal M}_{\Lambda}$, containing $(Y, \eta)$. 
Here $\eta : H^2(Y, \Z) \to \Lambda$ is a marking. 
Huybrechts constructed the marked moduli space ${\mathcal M}_{\Lambda}$ (\cite[1.18]{Hu99}) by patching Kuranishi spaces via local Torelli theorem for hyperk\"ahler manifolds (\cite[Theorem 5]{Be83}, \cite[25.2]{GHJ03}). 
By construction, ${\mathcal M}_{\Lambda}$ is smooth, but highly non-Hausdorff. He also showed that 
the period map 
$$p : {\mathcal M}^0 \to {\mathcal D} = \{[\omega] \in \BP(\Lambda \otimes \C\, \vert (\omega, \omega) = 0 \, ,\, (\omega, \overline{\omega}) > 0 \}$$ is a surjective holomorphic map of degree $1$ (\cite[Theorem 8.1]{Hu99}, see also \cite{Ve09}, \cite{Hu11} for degree and futher development). Let $[\omega] \in {\mathcal D}$. If $p^{-1}([\omega])$ ($\subset {\mathcal M}^0$) is not a single point, then  $p^{-1}([\omega])$ consists of points, being mutually inseparable, corresponding to birational hyperk\"ahler manifolds (\cite[Theorem 8.1]{Hu99}). 

By using the Hodge theoretic Torelli type Theorem (\cite{Ma11}), Markman and Mehrotra (\cite[Theorem 4.1]{MM12}) proved that the marked generalized Kummer manifolds are dense in ${\mathcal M}^0$. Actually they proved the following stronger density result: 

\begin{theorem}\label{markman}
There is a dense subset ${\mathcal D}' \subset {\mathcal D}$ such that if $[\omega] \in {\mathcal D}'$, then any point of $p^{-1}([\omega])$ corresponds to a marked generalized Kummer manifold. 
\end{theorem}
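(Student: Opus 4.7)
The plan is to combine Markman's Hodge-theoretic Torelli theorem \cite{Ma11} with a density statement for marked generalized Kummer manifolds inside ${\mathcal M}^0$. Let ${\mathcal K} \subset {\mathcal M}^0$ denote the locus of marked pairs $(K_{n-1}(A), \eta)$, as $A$ varies over all $2$-dimensional complex tori and $\eta$ over compatible markings. The construction $A \mapsto K_{n-1}(A)$ is functorial enough that a marking of $H^2(A, {\mathbf Z})$, together with a choice of image in $\Lambda$ for the exceptional class, canonically produces a marking of $H^2(K_{n-1}(A), {\mathbf Z})$. Consequently ${\mathcal K}$ decomposes as a countable union of analytic subvarieties of ${\mathcal M}^0$, each isomorphic to (a connected component of) the period space of marked $2$-tori.

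The main step is to show that the period image ${\mathcal D}_K := p({\mathcal K}) \subset {\mathcal D}$ is (Euclidean) dense. For one fixed marking of the exceptional class, the image of the corresponding family of Kummer periods sits inside a Kummer subperiod domain of complex dimension $4$ coming from the sublattice $H^2(A, {\mathbf Z}) \subset H^2(K_{n-1}(A), {\mathbf Z})$. As the marking varies, these $4$-dimensional subperiod domains are translated around by the monodromy group $\Mon^2 \subset O(\Lambda)$ that acts on ${\mathcal D}$. Density of ${\mathcal D}_K$ then reduces to showing that the $\Mon^2$-orbit of one Kummer subperiod domain is Euclidean dense in ${\mathcal D}$, which I would establish using the arithmeticity of $\Mon^2$ and ergodicity of its action on the symmetric-type domain ${\mathcal D}$, in the spirit of Ratner/Mozes--Shah equidistribution results (and paralleling the classical statement that marked Kummer K3 surfaces are dense in the K3 period domain).

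With density of ${\mathcal D}_K$ in hand, Markman's Torelli theorem \cite{Ma11} finishes the argument. For each $[\omega] \in {\mathcal D}$, any two points of $p^{-1}([\omega])$ correspond to bimeromorphic hyperk\"ahler manifolds. One then invokes a birational rigidity statement: if a hyperk\"ahler manifold of Kummer deformation type is bimeromorphic to a generalized Kummer manifold $K_{n-1}(A)$, then it is itself isomorphic to $K_{n-1}(A')$ for some $2$-torus $A'$. The desired ${\mathcal D}'$ is then taken to consist of those $[\omega] \in {\mathcal D}_K$ for which every point of $p^{-1}([\omega])$ represents a Kummer model; the potential bad locus (where a birational sibling might escape the Kummer class, or where $p^{-1}([\omega])$ is unexpectedly large) is a countable union of proper analytic subvarieties of ${\mathcal D}_K$ that can be removed without destroying density. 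The main obstacles are the density in the second paragraph, which rests on a nontrivial analysis of the monodromy action in Verbitsky's framework, and the birational rigidity in the third, which is the genuinely new contribution of \cite{MM12} and relies on the uniqueness of the Albanese/Hilbert-scheme structure underlying the Kummer construction.
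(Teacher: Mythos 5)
The paper does not prove Theorem (\ref{markman}) at all: it is imported verbatim from Markman--Mehrotra \cite[Theorem 4.1]{MM12} (together with the Torelli theorem of \cite{Ma11}), so there is no internal argument to compare yours against; I can only judge your outline on its own terms. Its skeleton (Torelli plus density of Kummer periods) is the right one, but two steps are genuinely problematic. First, the density of $\bigcup_{e}\left(e^{\perp}\cap\mathcal{D}\right)$, with $e$ running over the $\Mon^{2}$-orbit of the half-exceptional class, neither needs nor is cleanly delivered by ergodicity: Moore's theorem gives density of \emph{almost every} $\Mon^{2}$-orbit in $\mathcal{D}$, whereas the orbit of your fixed $4$-dimensional subdomain is a countable union of measure-zero sets, so ergodicity says nothing about this particular set, and Ratner/Mozes--Shah concern unipotent flows on $\Gamma\backslash G$ and would require substantial extra work to apply here. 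The standard route is elementary: given $[\omega_{0}]\in\mathcal{D}$ and a neighbourhood, perturb $\omega_{0}$ until $\omega_{0}^{\perp}\cap\Lambda$ contains a primitive vector of square $-2n$ and the correct divisibility; by Eichler's criterion all such vectors lie in one monodromy orbit, so the corresponding hyperplane sections of $\mathcal{D}$ are already dense.

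Second, and more seriously, your ``birational rigidity'' claim --- that a hyperk\"ahler manifold bimeromorphic to $K_{n-1}(A)$ is itself isomorphic to some $K_{n-1}(A')$ --- is unjustified, and there is no reason to believe it: a Mukai flop of a generalized Kummer fourfold along a Lagrangian plane (precisely the geometry studied in \cite{HT10}) is a hyperk\"ahler birational model with no Kummer structure in sight, and no such rigidity is proved in \cite{MM12}. Your fallback (discard the locus where a birational sibling escapes the Kummer class) is closer to the truth but assumes exactly what must be shown, namely that this bad locus is a proper countable union of analytic subsets of each $e^{\perp}\cap\mathcal{D}$. The clean fix, which is what makes the ``strong'' density work, is to shrink $\mathcal{D}'$ to the locus where $\omega^{\perp}\cap\Lambda=\mathbf{Z}e$: then the N\'eron--Severi lattice of the corresponding manifold is $\mathbf{Z}e$, there are no walls, the fibre $p^{-1}([\omega])$ is a single point, and that point is the generalized Kummer you constructed, so no rigidity statement is needed. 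This locus remains dense because one is only removing countably many further hyperplane sections from each $e^{\perp}\cap\mathcal{D}$.
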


Consider the Kuranishi family $u : {\mathcal U} \to {\mathcal K}$ of $Y$. Here and hereafter we freely shrink ${\mathcal K}$ around $0 = [Y]$. 
Since the Kuranishi family is universal, $g \in {\rm Aut}\, (Y)$ induces automorphisms 
$\tilde{g} \in {\rm Aut}\, ({\mathcal U})$ and $\overline{g} \in {\rm Aut}\, ({\mathcal K})$ such that $u \circ \tilde{g} = \overline{g} \circ u$ 
and 
$\tilde{g}\vert Y = g$. Since ${\mathcal K}$ is locally isomorphic to ${\mathcal D}$ by the local Torelli theorem, the locus 
$${\mathcal K}' \subset {\mathcal K}\,\, ,$$ 
consisting of the point $t$ such that 
$u^{-1}(t)$ is a generalized Kummer manifold, is dense in ${\mathcal K}$. 
This is a direct consequence of Theorem \ref{markman} and 
the construction of ${\mathcal M}^0$ explained above. {\it Here we also emphasize that the density in ${\mathcal M}^0$ is not sufficient to conclude this.} 

From now we follow Beauville's argument (\cite[Proof of Proposition 10]{Be83-2}). 

Let $T_Y$ be the tangent bundle of $Y$. Then, one can take ${\mathcal K}$ as a small polydisk in 
$H^1(Y, T_Y)$ with center $0$. 
As $\omega_Y$ is everywhere non-degenerate, we have an isomorphism
$$H^1(Y, T_Y) \simeq H^1(X, \Omega_Y^1)$$ 
induced by the isomorphism 
$$T_Y \simeq \Omega_Y^1 = T_Y^* \,\, :\,\, v \mapsto \omega_Y(v, *)\,\, .$$ 
As $g$ is cohomologically trivial 
and $H^{2,0}(Y) = H^0(Y, \Omega_Y^2) = \C\omega_Y$, we have $g^*\omega_Y = \omega_Y$ and $g^* \vert_{H^1(Y, \Omega_Y^1)} = id$. Hence by the isomorphism above, we obtain $g^*|_{H^1(Y, T_Y)} = id$, and therefore, $\overline{g} = id_{{\mathcal K}}$. 

Let $t \in {\mathcal K}$ be any point of ${\mathcal K}$. Then, by $\overline{g} = id_{{\mathcal K}}$, the morphism $\tilde{g}$ preserves the fiber $Y_t = u^{-1}(t)$, 
i.e., 
$$\tilde{g}\vert_{Y_t} \in {\rm Aut}\, (Y_t)\,\, .$$ 
Put $g_t := \tilde{g}\vert_{Y_t}$. Then $g_t$ is also cohomologically trivial, because $g_t^*\vert_{H^*(Y_t, \Z)}$ 
is derived from the action of $\tilde{g}$ on the constant system $\oplus_{k=0}^{4(n-1)} R^ku_* \Z$. Then $g_t = id_{Y_t}$ for all $t \in {\mathcal K}'$, as we already proved  
Theorem \ref{main} for $K_{n-1}(A)$ in Section \ref{sect3}. Since ${\mathcal U}$ is Hausdorff 
and $\tilde{g}$ is continuous, it follows that 
$\tilde{g} = id_{\mathcal U}$. Hence 
$g = g_0 = id_Y$ as well. This completes the proof of Theorem \ref{main}. 

\section{A few concluding remarks.}\label{sect5}

In this section, we remark a few relevant facts, which should be known to some experts.

Our first remark is about an anologue of Theorem \ref{main} for a hyperk\"ahler manifold deformation equivalent to the Hilbert scheme ${\rm Hilb}^n\, (S)$ of a K3 surface $S$. 

Markman and Mehrotra (\cite[Theorem 1.1]{MM12}) also proved the strong density result for ${\rm Hilb}^n (S)$ of K3 surfaces $S$. So, the same argument as in Section \ref{sect4} together with Beauville's result 
(Theorem \ref{beauville} (1)) 
implies the following result due to Mongardi \cite[Lemma 1.2]{Mo13}:

\begin{theorem}\label{hilb}
Let $W$ be a hyperk\"ahler manifold deformation equivalent to 
${\rm Hilb}^n (S)$. Then, the action $\rho_2 : {\rm Aut}\, (W) \to {\rm GL}\, (H^2(W, \Z))$ is faithful. 
\end{theorem}

Our second remark is about the fixed locus of symplectic automorphism of finite order. 

In Lemma \ref{fixed}, we described the fixed locus $X^a$. Our description shows that $X^a$ is a disjoint union of smooth hyperk\"ahler manifolds. However, this is not accidental:
\begin{proposition}\label{symplectic} 
Let $(M, \omega_M)$ be a holomorphic symplectic manifold of dimension 
$2d$, i.e., $M$ is a compact K\"ahler manifold and $\omega_M$ is an everywhere non-degenerate holomorphic $2$-form on $M$ (not necessarily unique up to $\C^{\times}$). Let $h \in {\rm Aut}\, (M)$ such that 
$h^*\omega_M = \omega_M$ and $h$ is of finite order $m$. Let $F$ 
be a connected component of the fixed locus 
$M^h = \{P \in M \vert h(P) = P \}$. Then $(F, \omega_M \vert_{F})$ is a holomorphic symplectic manifold (possibly a point). 
\end{proposition}

\begin{proof} $F$ is isomorphic to the intersection of the graph of $h$ and the diagonal $\Delta$ in $M \times M$. So it is compact and K\"ahler, possibly singular. Let $P \in F$. Since $h$ is of finite order, $h$ is locally linearizable at $P$ (see the proof of \cite[Lemma 1.3]{Ka84}). That is, there are local coordinates $(y_1, y_2, \cdots , y_{2d})$
at $P$ such that 
\begin{equation}\label{eq51}
h^*y_i = y_i\,\, (1 \le \forall i \le r)\,\, ,\,\, h^*y_j = c_jy_j\,\, (r+1 \le \forall j \le 2d)\,\, .
\end{equation} Here $c_j \not= 1$ and satisfies $c_j^m = 1$. Then $F$ is locally defined by $y_j = 0$ ($r+1 \le j \le 2d$) in $M$. Hence $F$ 
is smooth. Consider the linear differential map 
$$dh_{P} : T_{M,P} \to T_{M, P}$$ of the tangent space $T_{M, P}$ of $M$ at $P$.
By Equation (\ref{eq51}), we have the decomposition 
\begin{equation}\label{eq52}
T_{M, P} = T_{F, P} \oplus N\,\, .
\end{equation} 
Here $N = \oplus_{j=r+1}^{2d} \C v_j$ for some $v_j$ ($r+1 \le j \le 2d$) 
such that $dh_{P}(v_j) = c_j^{-1}v_j$ and the tangent space $T_{F,P}$ of $F$ at $P$ is exactly the invariant subspace $(T_{M, P})^{dh_{P}}$. Using $h^*\omega_M = \omega_M$, we deduce that
$$\omega_{M, P}(v, v_j) = \omega_{M, P}(dh_{P}(v), dh_{P}(v_j)) = 
\omega_{M, P}(v, c_j^{-1}v_j) = c_j^{-1}\omega_{M, P}(v, v_j)\,\, ,$$
for any $v \in T_{F, P}$ and $v_j$ ($r+1 \le j \le 2d$). As $c_j \not= 1$, it follows that 
$$\omega_{M, P}(v, v_j) = 0$$ 
for all $v \in T_{F, P}$ and $v_j$ with $r+1 \le j \le 2d$. Hence, the decomposition (\ref{eq52}) is orthogonal with respect to $\omega_{M, P}$. As $\omega_{M, P}$ is non-degenerate, 
it follows from the orthogonality of the decomposition that $\omega_{M, P} \vert_{T_{F, P}}$ is also non-degenerate on $T_{F, P}$ (possibly $\{0\}$). Hence $(F, \omega\vert_{F})$ is a smooth symplectic manifold (possibly a point) as well. This completes 
the proof of Proposition \ref{symplectic}. 
\end{proof}

\begin{remark}\label{camere} Proposition \ref{symplectic} is a formal generalization of a result of 
Camere (\cite[Proposition 3]{Ca10}) for a symplectic involution.
\end{remark}

\end{document}